\newtheorem{theorem}{Theorem}
\newtheorem{lemma}{Lemma}[section]
\newtheorem{proposition}[lemma]{Proposition}
\theoremstyle{definition}
\newtheorem{remark}[lemma]{Remark}
\numberwithin{equation}{section}
\newcommand {\NN}  {{\mathbb N}}
\newcommand {\ZZ}  {{\mathbb Z}}
\newcommand {\CC}  {{\mathbb C}}
\newcommand{\kommentar}[1]{}
\newcommand{\comment}[1]{}
\newcommand{\set}[1]{\left\{#1\right\}}
\newcommand{\rmus}[1]{\bigskip {\bf Remark for us: }{#1} \bigskip}
\DeclareMathOperator{\lcm}{ lcm \,}
\DeclareMathOperator{\Card}{ Card \,}
\def\eps{\varepsilon}
\def\ons{\setminus \set{0}}
\title{Salem numbers from a class of star-like trees}
\author[H.~Brunotte]{Horst~Brunotte}
\address{Haus-Endt-Strasse 88 \\ D-40593 D\"usseldorf, GERMANY}
\email{brunoth@web.de}
\author[J.~M.~Thuswaldner]{J\"org~M.~Thuswaldner}
\address{Chair of Mathematics and Statistics,
University of Leoben, Franz-Josef-Strasse 18, A-8700 Leoben,
AUSTRIA} \email{Joerg.Thuswaldner@unileoben.ac.at}
\thanks{This work was supported by the Agence Nationale de la Recherche and the Austrian Science Fund through the projects ``Fractals and Numeration'' (ANR-12-IS01-0002, FWF I1136) and ``Discrete Mathematics'' (FWF W1230).}
\date{\today}
\keywords{Salem number, Pisot number, Coxeter polynomial}
\subjclass[2010]{11K16}
\begin{document}

\begin{abstract}
We study the Coxeter polynomials associated with certain star-like trees. In particular, we exhibit large Salem factors of these polynomials and give convergence properties of their dominant roots.
\end{abstract}

\maketitle

\section{Introduction}

Several different methods to construct minimal polynomials of Salem numbers have been investigated in the literature (see e.g.\ \cite{BDGPS,boyd,grossmcmullen,smyth}). Various authors associate Salem numbers to Coxeter polynomials and use this relation in order to construct Salem numbers (cf.\ for instance \cite{cannwagr,floydplotnick,grohirmcm,lakatos,mckrowsmy}). In this paper we follow the very explicit approach of Gross et al.~\cite{grohirmcm} and provide precise information on the decomposition of Coxeter polynomials of certain star-like trees into irreducible factors, thereby giving estimates on the degree of the occurring Salem factor. 

To be more precise, let $r, a_0,\ldots, a_r \in \NN$ such that $a_0\ge 2,\ldots, a_r \ge 2$. We consider the star-like tree
 $T=T (a_0,\ldots, a_r)$ with $r+1$ arms of  $a_0-1,\ldots, a_r-1$ edges, respectively. According to \cite[Lemma 5]{mckrowsmy}
the Coxeter polynomial of $T (a_0,\ldots, a_r)$ is given by
$$
R_{T (a_0,\ldots, a_r)} (z) =\prod_{i=0}^{r} \Bigl( \frac{z^{a_i}-1}{z-1}\Bigr)\Bigl(z+1- z \sum_{i=0}^{r}  \frac{z^{a_i-1}-1}{z^{a_i}-1}\Bigr).
$$
Note that $R_T$ can be written as 
\begin{equation}\label{eq:decomp}
R_T(z)=C(z)S(z),
\end{equation}
where $C$ is a product of cyclotomic polynomials and $S$ is the minimal polynomial of a Salem number or of a quadratic Pisot number. Indeed, by the results of \cite{pena},  the zeros of $R_T$ are either real and positive or  have modulus 1. The decomposition \eqref{eq:decomp} now follows from \cite[Corollaries~7 and~9, together with the remark after the latter]{mckrowsmy}, as these results imply that $R_T$ has exactly one irrational real positive root of modulus greater than $1$. 

For Coxeter polynomials corresponding to star-like trees with three arms we are able to say much more about the factors of the decomposition \eqref{eq:decomp}. In particular, we shall prove the following result.

\begin{theorem}\label{thm1}
Let  $a_0,  a_1, a_2 \in \ZZ$  such that $a_2 > a_1 >a_0>1$ and
$(a_0, a_1, a_2)\ne (2,3,t)$ for all $t\in \set{4, 5, 6}$. Further, let $T:=T (a_0, a_1, a_2)$ be the star-like tree with three arms of  $a_0-1, a_1-1, a_2-1$ edges, and let $\lambda$ be its largest eigenvalue. Then $\tau >1$ defined by
$$
\sqrt {\tau}+ 1/\sqrt {\tau}=  \lambda
$$ 
is a Salem or a quadratic Pisot root of the Coxeter polynomial $R_{T}$ of  $T$. 
If $S$ is the minimal polynomial of $\tau$  then we can write 
\begin{equation}\label{eq:clear}
R_T(x)=S(x)C(x),
\end{equation}
where $C$ is a product of cyclotomic polynomials of orders bounded by $420 (a_2-a_1+a_0-1)$ whose roots have multiplicity bounded by an effectively computable constant $m(a_0,a_2-a_1)$. Thus 
\begin{equation}\label{SalemLowerBound}
\deg S \ge \deg R_T - m(a_0,a_2-a_1)\sum_{k\le 420 (a_2-a_1+a_0-1)}\varphi(k),
\end{equation}
where $\varphi$ denotes Euler's $\varphi$-function.
\end{theorem}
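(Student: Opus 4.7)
For the first claim, I would invoke the classical correspondence between the Coxeter polynomial of a bipartite graph and the characteristic polynomial $\chi_T(\mu)$ of its adjacency matrix: for a bipartite tree on $n$ vertices one has $R_T(z) = z^n \chi_T\!\bigl(\sqrt{z} + 1/\sqrt{z}\bigr)$ (up to sign), so $z$ is a root of $R_T$ exactly when $\mu = \sqrt{z} + 1/\sqrt{z}$ is an eigenvalue of $T$. Since the hypothesis rules out the triples $(2,3,4),(2,3,5),(2,3,6)$, which are precisely the star-like trees of Dynkin type $E_7, E_8$ and affine type $\widetilde{E}_8$, the Perron eigenvalue satisfies $\lambda > 2$; this forces $\tau > 1$ with $\sqrt\tau + 1/\sqrt\tau = \lambda$. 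As the introduction recalls, $R_T$ has a unique real root of modulus greater than $1$, so $\tau$ must be the root of $S$ in the decomposition \eqref{eq:decomp}.

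For the cyclotomic bound, I would start by simplifying the McKee-Rowlinson-Smyth formula. Setting $A_i := z^{a_i} - 1$ and using the identity $z(z^{k-1}-1) = (z^k-1)-(z-1)$, a short calculation gives
\begin{equation*}
 P(z) \;:=\; (z-1)^3 R_T(z) \;=\; (z-2)\,A_0 A_1 A_2 \;+\; (z-1)\bigl(A_0 A_1 + A_1 A_2 + A_0 A_2\bigr).
\end{equation*}
A primitive $n$-th root of unity $\zeta$ (with $n > 1$) is a root of $R_T$ if and only if $P(\zeta)=0$, and I would split the analysis on $I:=\{\,i:n\mid a_i\,\}$. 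A quick valuation argument at $\zeta$ shows that $|I|=1$ contributes no root, $|I|\ge 2$ contributes a root whose multiplicity is controlled by the leading terms of $P$, and $I=\emptyset$ is an additional case requiring separate treatment.

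In the sub-case $|I|\ge 2$, $n$ divides $\gcd(a_i,a_j)$ for some pair, and since $\gcd(a_i,a_j)$ divides $a_j-a_i$, one obtains the clean bound $n\le \max\{a_0,\,a_2-a_1\}$, well within the claimed bound. The essential sub-case is $I=\emptyset$: here every $\alpha_i:=\zeta^{a_i}-1$ is nonzero, and dividing $P(\zeta)=0$ by $\alpha_0\alpha_1\alpha_2$ and separating imaginary parts produces the cotangent identity
\begin{equation*}
 \cot\!\bigl(\tfrac{\pi a_0}{n}\bigr) + \cot\!\bigl(\tfrac{\pi a_1}{n}\bigr) + \cot\!\bigl(\tfrac{\pi a_2}{n}\bigr) \;=\; \cot\!\bigl(\tfrac{\pi}{n}\bigr),
\end{equation*}
which depends on the $a_i$ only through their residues modulo $n$ (the real parts match automatically).

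The main obstacle is bounding the solutions $n$ of this cotangent identity. My plan is to clear denominators, turning it into a vanishing $\ZZ$-linear relation among a bounded collection of $n$-th roots of unity, and then appeal to a Conway-Jones-type classification of minimal vanishing sums of roots of unity: for a bounded number of summands, the prime divisors of $n$ are forced into $\{2,3,5,7\}$, which accounts for the universal constant $420=\lcm(1,2,\ldots,7)$. The residues $a_i\bmod n$ then govern an auxiliary multiplicative factor bounded by $a_2-a_1+a_0-1$ (since only the $a_i$ modulo small moduli, and the differences $a_j-a_i$, enter the resulting constraints), yielding $n \le 420(a_2-a_1+a_0-1)$. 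Applying the same analysis to the derivatives of $P$ at $\zeta$ supplies the uniform multiplicity bound $m(a_0,a_2-a_1)$, and \eqref{SalemLowerBound} follows by subtracting the total cyclotomic contribution $m(a_0,a_2-a_1)\sum_{k\le 420(a_2-a_1+a_0-1)}\varphi(k)$ from $\deg R_T = a_0+a_1+a_2-2$.
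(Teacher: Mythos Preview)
Your opening argument for $\tau$ being Salem or quadratic Pisot is correct and in fact more explicit than the paper's, which simply cites the decomposition \eqref{eq:decomp} established in the introduction. Your symmetric rewriting $P(z)=(z-2)A_0A_1A_2+(z-1)(A_0A_1+A_1A_2+A_0A_2)$ is valid and makes the cases $|I|=1$ and $|I|\ge 2$ transparent; the resulting cotangent identity for $I=\emptyset$ is also correct.

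The gap is in the decisive case $I=\emptyset$. Clearing denominators in the cotangent identity recovers precisely the vanishing sum $P(\zeta)=0$, a $\ZZ$-relation among ten monomials $\zeta^{e}$ whose exponents $e$ range over values such as $0,1,a_0{+}1,a_1{+}1,a_1{+}a_0,a_2{+}1,a_2{+}a_0,a_1{+}a_2,\ldots$. A Mann/Conway--Jones argument bounds $\mathrm{ord}(\zeta)$ only up to a factor coming from exponent \emph{differences} inside a primitive sub-sum, and a priori those differences can be as large as $a_1{+}a_2$. Your sentence ``the residues $a_i\bmod n$ then govern an auxiliary multiplicative factor bounded by $a_2-a_1+a_0-1$'' is exactly the point that needs proof, and your sketch does not supply it. The paper's mechanism for this is different: it first proves the block decomposition $P(z)=z^{a_1+a_2}Q(z)+z^{a_1+1}R(z)+S(z)$ with $\deg Q,\deg R,\deg S\le a_2-a_1+a_0-1$ (Lemma~\ref{pblock}), and then runs the prp/Mann analysis case by case (Proposition~\ref{farc}): either some primitive sub-sum lives inside a single block (so ratios have exponents $\le a_2-a_1+a_0-1$), or the sub-sum is a pair/triple spread across blocks, in which case a gcd argument on \emph{differences of differences} again collapses the bound to something controlled by $a_0$ and $a_2-a_1$. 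Without this block structure or an equivalent device, your Conway--Jones step does not yield a bound independent of $a_1$.

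The multiplicity bound has the same issue. The paper's Lemma~\ref{lem:mult} again hinges on the block form: one shows $\min_{|z|=1}|\tilde Q(z)|>0$ and then estimates $|\tilde P^{(n)}(z)|$ so that the leading $(a_1{+}a_2)_{(n)}$-term dominates for $n$ large depending only on $a_0$ and $a_2-a_1$. Your phrase ``applying the same analysis to the derivatives of $P$'' does not indicate how you would isolate a dominant term without the block decomposition, so the effective constant $m(a_0,a_2-a_1)$ is not established.
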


\begin{remark}[Periodicity properties of cyclotomic factors]\label{rem:period}
Gross et al.~\cite{grohirmcm} study certain Coxeter polynomials and prove periodicity properties of their cyclotomic factors. Contrary to their case, our Coxeter polynomials $R_T$ do not have the same strong separability properties (cf.\ Lemma~\ref{lem:mult}). For this reason, we could not exhibit analogous results for $C(x$), however, we obtain weaker periodicity properties in the following way.

In the setting of Theorem~\ref{thm1} assume that $a_0$ as well as $a_2-a_1$ are constant. For convenience set $S_{a_1}=R_{T(a_0,a_1,a_2)}$ and let $\zeta_k$ be a root of unity of order $k$. It follows from \eqref{expzdarst} below (see \eqref{pzdef} for the definition of $P$) that $S_{a_1}(\zeta_k)=0$ if and only if  $S_{a_1+k}(\zeta_k)=0$, i.e., the 
fact that the $k$-th cyclotomic polynomial divides $S_{a_1}$ depends only on the residue class of $a_1 \pmod k$. 
Therefore, setting $K:=\lcm\{1,2,\ldots,  420 (a_2-a_1+a_0-1)\}$,
the set of all cyclotomic polynomials dividing $S_{a_1}$ is determined by the residue class of $a_1 \pmod K$.
 
If we determine the set $\{k \,:\, k \le 420 (a_2-a_1+a_0-1),\, S_{a_1}(\zeta_k) = 0 \}$ for all $a_1 \le K$ we thus know exactly which cyclotomic factor divides which of the polynomials $S_{a_1}$ for $a_1\in\mathbb{N}$. Obviously, this knowledge would allow to improve the bound \eqref{SalemLowerBound}.
\end{remark}

\begin{remark}[Degrees of the Salem numbers]\label{rem:degree}
Theorem~\ref{thm1} enables us to exhibit Salem numbers of arbitrarily large degree.
 Indeed, if $a_0$ and the difference $a_2-a_1$ are kept small and  $a_1\to \infty$ then \eqref{SalemLowerBound} assures that $\deg S \to \infty$. We also mention here that Gross and McMullen \cite[Theorem~1.6]{grossmcmullen} showed that for any odd integer
$n\ge 3$ there exist infinitely many {\it unramified} Salem numbers of degree $2 n$;
recall that a Salem polynomial $f$ is said to be unramified if it satisfies $|f(-1)|=|f(1)|=1$.
The construction pursued in this work substantially differs from ours: it is proved that every unramified Salem polynomial arises from
an  automorphism of an indefinite lattice.
\end{remark}

If  two of the arms of the star-like tree under consideration get longer and longer, the associated Salem numbers converge to the $m$-bonacci number $\varphi_m$, where $m$ is the (fixed) length of the third arm. This is made precise in the following theorem.

\begin{theorem}\label{th:m}
Let $a_1 > a_0 \ge 2$ and $\eta \ge 1$ be given and set $a_2 = a_1+\eta$.  Then, for $a_1\to\infty$, the Salem root $\tau(a_0, a_1, a_2)$ of the  Coxeter polynomial associated with $T (a_0, a_1, a_2)$ converges to $\varphi_{a_0}$, where the degree of $\tau(a_0, a_1, a_2)$ is bounded from below by \eqref{SalemLowerBound}.
\end{theorem}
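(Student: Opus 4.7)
The plan is to factor out the roots-of-unity part of $R_T$, show that the remaining rational factor $Q(z)$ converges uniformly on compact subsets of $\{|z|>1\}$ to a rational function whose unique zero of modulus greater than $1$ is $\varphi_{a_0}$, and then invoke Hurwitz's theorem to conclude that $\tau\to\varphi_{a_0}$.

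Concretely, I would write
\[
R_T(z)=\prod_{i=0}^{2}\frac{z^{a_i}-1}{z-1}\cdot Q(z),\qquad Q(z)=z+1-z\sum_{i=0}^{2}\frac{z^{a_i-1}-1}{z^{a_i}-1}.
\]
The first factor has all its zeros on the unit circle, while the discussion after \eqref{eq:decomp} identifies $\tau$ as the unique zero of $R_T$ with $|\tau|>1$. Hence $\tau$ is also the unique zero of $Q$ in $\{|z|>1\}$. For $|z|\ge 1+\delta$,
\[
\frac{z^{a-1}-1}{z^{a}-1}=\frac{1}{z}+O\bigl((1+\delta)^{-a}\bigr)\qquad\text{as }a\to\infty,
\]
uniformly. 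Since $a_2=a_1+\eta$ with $\eta$ fixed, the summands with $i=1,2$ in $Q$ each tend to $1/z$, and a short algebraic simplification yields
\[
Q_\infty(z)=z-1-\frac{z^{a_0}-z}{z^{a_0}-1}=\frac{p_{a_0}(z)}{1+z+\cdots+z^{a_0-1}},\qquad p_{a_0}(z):=z^{a_0}-z^{a_0-1}-\cdots-z-1.
\]
The convergence $Q\to Q_\infty$ is uniform on every annulus $A_{\delta,M}:=\{1+\delta\le|z|\le M\}$, on which both functions are holomorphic (all their poles are roots of unity). A direct triangle-inequality argument applied to the equation $z^{a_0}=1+z+\cdots+z^{a_0-1}$ (equivalently, the well-known Pisot property of the $a_0$-bonacci number) shows that $\varphi_{a_0}$ is the unique zero of $p_{a_0}$ of modulus $\ge 1$, and inspection of $p_{a_0}(z)/z^{a_0}=1-z^{-1}-\cdots-z^{-a_0}$, which is strictly increasing on $(0,\infty)$, shows that $\varphi_{a_0}$ is simple.

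To conclude, pick a closed disc $D\subset A_{\delta,M}$ centred at $\varphi_{a_0}$ on whose boundary $Q_\infty$ does not vanish. By uniform convergence and Hurwitz's theorem, for all sufficiently large $a_1$ the function $Q$ has exactly one zero in $D$; being of modulus $>1$ this zero must coincide with $\tau$, and shrinking the radius of $D$ proves $\tau\to\varphi_{a_0}$. The lower bound on $\deg S$ is immediate from Theorem~\ref{thm1}: with $a_0$ and $\eta$ fixed the quantity subtracted in \eqref{SalemLowerBound} is an absolute constant, whereas $\deg R_T=a_0+a_1+a_2-2\to\infty$, and for $a_0=2$ the excluded triples $(2,3,t)$ of Theorem~\ref{thm1} are avoided as soon as $a_1\ge 4$. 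The only real technical point is the uniform-convergence bookkeeping on $A_{\delta,M}$; all other ingredients are classical.
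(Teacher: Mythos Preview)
Your argument is correct and rests on the same core idea as the paper's proof: both identify the $a_0$-bonacci polynomial as the limiting object and apply a Rouch\'e/Hurwitz argument on a small disc around $\varphi_{a_0}$. The packaging differs. The paper works with the polynomial $P(z)=(z-1)^3R_T(z)$ and the block decomposition $P(z)=z^{a_1+a_2}(z-1)M_{a_0}(z)+z^{a_1+1}R(z)+S(z)$ of Lemma~\ref{pblock}, then estimates the lower-order piece $z^{a_1+1}R(z)+S(z)$ directly on $\partial B_\varepsilon(\varphi_{a_0})$ and applies Rouch\'e. You instead isolate the rational factor, show it converges uniformly on annuli $\{1+\delta\le|z|\le M\}$ to $p_{a_0}(z)/(1+z+\cdots+z^{a_0-1})$, and invoke Hurwitz together with the a~priori fact that $\tau$ is the unique zero in $\{|z|>1\}$. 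Your route is a bit cleaner conceptually and avoids the explicit height bounds, while the paper's route has the advantage of reusing the decomposition already established for Theorem~\ref{thm1}; note in particular that the numerator $z^{a_0+1}-2z^{a_0}+1$ you obtain for $Q_\infty$ is exactly the polynomial $Q$ of Lemma~\ref{pblock}, so the two computations meet at the same place.
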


Besides that, we are able to give the following result which is valid for more general star-like trees.

\begin{theorem}\label{th:general}
Let $r\ge 1$, $a_r>  \cdots > a_1 > a_0 \ge 2$, and choose $k\in\{1,\ldots,r-1\}$.  Then, for fixed $a_0,\ldots,a_k$ and $a_{k+1},\ldots, a_r \to \infty$, the Salem root $\tau(a_0, \ldots , a_r)$ of the Coxeter polynomial associated with $T (a_0, \ldots, a_r)$ converges to the dominant Pisot root of
\begin{equation}\label{eq:qz}
Q(z)=(z+1-r+k)\prod_{i=0}^k(z^{a_i}-1)-z\sum_{i=0}^k(z^{a_i-1}-1)
\prod_{\begin{subarray}{c} j=0 \\ j\not=i \end{subarray}}^k (z^{a_j}-1).
\end{equation}
\end{theorem}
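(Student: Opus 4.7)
We follow the strategy used for Theorem~\ref{th:m}. Starting from $R_T(\tau)=0$ and cancelling the cyclotomic-type factor $\prod_i(z^{a_i}-1)/(z-1)$ (nonzero at the Salem root $\tau>1$), $\tau$ must be a zero of the second factor in the formula for $R_T$ from the introduction. Applying the algebraic identity $z(z^{a-1}-1)/(z^a-1)=1-(z-1)/(z^a-1)$ termwise rewrites this as
\begin{equation}\label{eq:planA}
\sum_{i=0}^{r}\frac{1}{1+\tau+\cdots+\tau^{a_i-1}}=r-\tau.
\end{equation}
The left-hand side is positive, so $\tau<r$; combined with $\tau>1$ this gives $\tau\in(1,r)$ uniformly in all parameters, hence compactness of the Salem roots as the $a_i$ vary.

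Given a sequence with $a_{k+1},\ldots,a_r\to\infty$, extract a subsequence $\tau\to\tau^*\in[1,r]$. Each summand in \eqref{eq:planA} is at most $1/a_i$ (since $\tau\ge 1$ implies $1+\tau+\cdots+\tau^{a-1}\ge a$), so those with $i>k$ tend to zero along the subsequence. If $\tau^*=1$, the limit of \eqref{eq:planA} would force $\sum_{i=0}^k 1/a_i=r-1$, contradicting the harmonic bound $\sum_{i=0}^k 1/a_i\le H_{k+2}-1<r-1$ (valid because $a_0<\cdots<a_k$ are distinct integers $\ge 2$ and $r\ge k+1$, so $H_{k+2}<k+1\le r$). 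Hence $\tau^*>1$, and then $(\tau^*)^{a_i}\to\infty$ for $i>k$ ensures that the corresponding terms in the equivalent equation $(\tau-1)\sum_{i=0}^{r} 1/(\tau^{a_i}-1)=r-\tau$ also vanish in the limit. Passing to the limit there yields $(\tau^*-1)\sum_{i=0}^k 1/((\tau^*)^{a_i}-1)=r-\tau^*$, which after clearing denominators is precisely $Q(\tau^*)=0$.

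It remains to identify $\tau^*$ as the unique dominant root of $Q$ and to show it is Pisot, pinning down the limit and upgrading subsequential to full convergence. For uniqueness of the real root in $(1,\infty)$, study $\hat Q(z):=z-r+(z-1)\sum_{i=0}^k 1/(z^{a_i}-1)$, which shares its real zeros with $Q$ on $(1,\infty)$: one has $\hat Q(1^+)<0$ (by the same harmonic bound) and $\hat Q(r)>0$, and a derivative analysis exploiting the monotonicity of each $(z-1)/(z^a-1)$ on $(1,\infty)$ confines the real zeros to a single point $\tau^*\in(1,r)$. For the Pisot property, use the decomposition
$$Q(z)=(z-r)\prod_{i=0}^k(z^{a_i}-1)+(z-1)\sum_{i=0}^k\prod_{j\neq i}(z^{a_j}-1)$$
and run a Rouch\'e-type comparison on circles $|z|=\rho$ for $1<\rho<\tau^*$ to rule out roots of $Q$ in the annulus $1<|z|<\tau^*$; the remaining non-dominant roots then lie on the unit circle and come from cyclotomic factors of $Q$, so the minimal polynomial of $\tau^*$ (which divides $Q$) has all its other conjugates in $|z|<1$.

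The main obstacle is this final Rouch\'e step, because the error term $(z-1)\sum\prod_{j\neq i}(z^{a_j}-1)$ has the same polynomial degree as the leading term $(z-r)\prod(z^{a_i}-1)$, so a direct global Rouch\'e bound fails. Two workable strategies seem available: either adapt the finer separability analysis of \cite{grohirmcm} for a related Coxeter-polynomial family, or appeal to the classical dichotomy that any accumulation point of Salem numbers of unbounded degree is Pisot, which applies here because $\deg R_T\to\infty$ while $\deg Q$ stays fixed as $a_{k+1},\ldots,a_r\to\infty$.
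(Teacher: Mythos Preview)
Your convergence argument via the explicit identity
\[
\sum_{i=0}^{r}\frac{1}{1+\tau+\cdots+\tau^{a_i-1}}=r-\tau
\]
together with compactness and the harmonic bound ruling out $\tau^*=1$ is correct and is a genuinely different route from the paper's. The paper never writes this identity; instead it observes the asymptotic
\[
P(z)=z^{a_{k+1}+\cdots+a_r}Q(z)+O\bigl(z^{a_{k+2}+\cdots+a_r+\eta}\bigr)
\]
and reads off convergence (and, crucially, the fact that $Q$ has \emph{exactly one} root outside the closed unit disk) from Rouch\'e by comparing $P$ with $z^{a_{k+1}+\cdots+a_r}Q$; the point is that $P$ already has exactly one such root by the Salem structure, so this count transfers to $Q$. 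Your approach is more elementary for the limit itself, but it does not deliver this count, and that is where your proof breaks down.

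The Pisot step is a genuine gap. Your Rouch\'e comparison on $Q$ alone fails for the reason you state, and neither proposed rescue works. There is no ``classical dichotomy'' asserting that accumulation points of Salem numbers of unbounded degree are Pisot; no such theorem is known, and in any case the unboundedness of the Salem degree is only established in this paper for $r=2$. The paper's fix is short and avoids all of this: once Rouch\'e against $P$ shows $Q$ has a single root outside the unit circle, write $Q(z)=C(z)T(z)z^s$ with $C$ cyclotomic and $T$ either Pisot or Salem, and rule out the Salem case by checking that $C(z)T(z)$ is not self-reciprocal. This is done by inspecting two explicit coefficients of $Q$ (the constant/linear term versus the top/subleading term) in the three cases $k<r-2$, $k=r-2$, $k=r-1$; in each case the coefficients fail to match.

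A smaller issue: your uniqueness argument for the real root of $\hat Q$ on $(1,\infty)$ asserts monotonicity that need not hold. For instance, with $k=2$ and $(a_0,a_1,a_2)=(2,3,7)$ one computes $\hat Q'(1^+)=1-\sum_{i}(a_i-1)/(2a_i)<0$, so $\hat Q$ is not increasing near $1$. The statement is still true, but it follows for free once you know $Q$ has only one root outside the unit disk, which again comes from the Rouch\'e comparison with $P$ rather than from analysing $Q$ in isolation.
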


\section{Salem numbers generated by Coxeter polynomials of star-like trees}

For convenience, we introduce the polynomial
\begin{equation}\label{pzdef}
\begin{array}{rrl}
\displaystyle P(z)&:=&\displaystyle (z-1)^{r +1} R_{T (a_0,\ldots, a_r)} (z) \\[3pt]
&=&\displaystyle \Bigl( \prod_{i=0}^{r} (z^{a_i}-1)\Bigr) (z+1)- z \sum_{i=0}^{r}  \Bigl( (z^{a_i-1}-1) \prod_{j=0, j\ne i}^{r} (z^{a_j}-1)\Bigr).
\end{array}
\end{equation}
Of course, like $R_T$,  the polynomial $P$ can be decomposed as a product of a Salem (or quadratic Pisot) factor times a factor containing only cyclotomic polynomials.

Now, we concentrate on star-like trees with three arms, i.e., we assume that $r=2$. 

\begin{lemma} \label{pblock}
Let $a_2 > a_1 > a_0$. Then for $T(a_0,a_1,a_2)$ the polynomial $P(z)$ reads
\begin{equation}\label{expzdarst}
  P(z)= z^{a_1 + a_2} Q(z) + z^{a_1 + 1} R(z)+ S(z)
\end{equation}
with
\begin{eqnarray*}
 Q(z) &=& z^{a_0+1} - 2 z^{a_0}+ 1, \\
  R(z) &=& z^{a_2-a_1+a_0-1} - z^{a_2-a_1} + z^{a_0-1} -1,\\
  S(z) &=& -z^{a_0 +1} + 2z -1.
  \end{eqnarray*}
Moreover,
$$ 
\max\{ \deg(Q), \deg(R), \deg(S) \} = a_2-a_1+a_0-1, \quad \deg (P)= a_0 + a_1 +a_2 + 1,
$$ 
and the (naive) height of $P$ equals $2$.
\end{lemma}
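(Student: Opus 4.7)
The plan is a direct substitution of $r=2$ into the definition~\eqref{pzdef} followed by a careful regrouping of monomials.

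First I would expand the leading product $(z+1)\prod_{i=0}^{2}(z^{a_i}-1)$ into $16$ signed monomials and each of the three summands $(z^{a_i-1}-1)\prod_{j\neq i}(z^{a_j}-1)$ into $8$ signed monomials; multiplying the latter sum by $z$ shifts all of its exponents by one. Combining and cancelling yields
\begin{equation*}
P(z)=z^{a_0+a_1+a_2+1}-2z^{a_0+a_1+a_2}+z^{a_1+a_2}+z^{a_0+a_2}+z^{a_0+a_1}-z^{a_2+1}-z^{a_1+1}-z^{a_0+1}+2z-1,
\end{equation*}
i.e.\ exactly ten surviving monomials (most of the $40$ inputs cancel in pairs, in particular all contributions at the exponents $a_i+a_j+1$ with $i\neq j$ and all the isolated $\pm z^{a_i}$ terms). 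Thanks to $a_0<a_1<a_2$ these ten exponents fall into three disjoint blocks: a top block $\{a_0+a_1+a_2+1,\,a_0+a_1+a_2,\,a_1+a_2\}$, a middle block $\{a_0+a_2,\,a_2+1,\,a_0+a_1,\,a_1+1\}$, and a bottom block $\{a_0+1,\,1,\,0\}$. The top-vs-middle separation uses $a_1+a_2>a_0+a_2$ and the middle-vs-bottom separation uses $a_1+1>a_0+1$, both of which reduce to $a_1>a_0$. Factoring out $z^{a_1+a_2}$, $z^{a_1+1}$, and $1$ from the three blocks respectively produces precisely the decomposition $P(z)=z^{a_1+a_2}Q(z)+z^{a_1+1}R(z)+S(z)$ with $Q$, $R$, $S$ as in the statement.

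The remaining claims are then read off by inspection. The highest exponent in the top block is $a_0+a_1+a_2+1$, giving $\deg P$. Directly from the formulas, $\deg Q=\deg S=a_0+1$ and $\deg R=a_2-a_1+a_0-1$ (the leading monomial of $R$ cannot cancel against $-z^{a_2-a_1}$ since $a_0>1$), which together yield the asserted value of $\max\{\deg Q,\deg R,\deg S\}$. Finally, the ten coefficients listed above all lie in $\{\pm 1,\pm 2\}$, so the naive height equals~$2$. The only real obstacle is the bookkeeping: one must check that every one of the forty monomials produced by the expansion is accounted for and that the pairwise cancellations are exact. Organising the verification by comparing coefficients at each of the (at most) ten distinct exponents, rather than by expanding the products sequentially, keeps the computation tractable.
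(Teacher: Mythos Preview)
Your proof is exactly the direct computation the paper has in mind, and the ten-term expansion of $P(z)$ together with the three-block decomposition is correct. One minor point you pass over: the claimed identity $\max\{\deg Q,\deg R,\deg S\}=a_2-a_1+a_0-1$ tacitly requires $a_2-a_1\ge 2$, since for $a_2=a_1+1$ one gets $\deg R=a_0<a_0+1=\deg Q=\deg S$; this is really a wrinkle in the lemma's statement rather than in your method, but your sentence ``which together yield the asserted value'' does not actually check the needed inequality.
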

\begin{proof}
This can easily be verified by direct computation.
\end{proof}

\begin{lemma}\label{lem:mult}
Let $a_2 > a_1 > a_0$ and let $P$ as in \eqref{pzdef} be associated to $T(a_0,a_1,a_2)$. Then there exists an effectively computable constant $m=m(a_0,a_2-a_1)$ which bounds the multiplicity of every root $z$ of $P$ with $|z|=1$. 
\end{lemma}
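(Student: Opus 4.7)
The plan is to apply a sparse-polynomial multiplicity bound to the representation of $P$ supplied by Lemma~\ref{pblock}. Writing $P(z) = z^{a_1+a_2}Q(z) + z^{a_1+1}R(z) + S(z)$ and setting $\eta := a_2-a_1$, one reads off that $Q$ has $3$ nonzero terms, $R$ has at most $4$ (with only $2$ in the degenerate case $\eta = a_0-1$), and $S$ has $3$; so together these three building blocks contribute at most $10$ monomials before any cancellation is taken into account.

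First I would verify that the three pieces $z^{a_1+a_2}Q(z)$, $z^{a_1+1}R(z)$ and $S(z)$ have pairwise disjoint exponent supports, so that no cross-block cancellation can reduce this count. Their supports are contained in $[2a_1+\eta,\,2a_1+\eta+a_0+1]$, $[a_1+1,\,a_1+a_0+\eta]$ and $[0,\,a_0+1]$ respectively; the standing hypothesis $a_1 > a_0$ yields $2a_1+\eta > a_1+a_0+\eta$ and $a_1+1 > a_0+1$, so these three intervals are pairwise disjoint. Consequently $P$ has at most $10$ nonzero monomial terms.

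The main step is then the following bound, which I would apply directly: if $f(z) = \sum_{i=1}^{N} c_i z^{n_i}$ with $c_i \ne 0$ and pairwise distinct $n_i$, then every nonzero root of $f$ has multiplicity at most $N-1$. For the proof one uses the Euler operator $\theta := z\,\frac{d}{dz}$, which satisfies $\theta^k z^n = n^k z^n$ and whose iterates are integer combinations of the operators $z^j(d/dz)^j$ for $j\le k$ (Stirling numbers). Hence if $z_0\ne 0$ is a root of $f$ of multiplicity $m$, then $f^{(j)}(z_0) = 0$ for $j<m$ implies $\theta^k f(z_0) = \sum_i c_i n_i^k z_0^{n_i} = 0$ for $k=0,\dots,m-1$; this places the nonzero vector $(c_i z_0^{n_i})_i$ in the kernel of the matrix $(n_i^k)_{0\le k<m,\,1\le i\le N}$, whose top $N\times N$ block is Vandermonde in the distinct $n_i$ and hence nonsingular. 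For $m \ge N$ this forces $(c_i z_0^{n_i})_i = 0$, a contradiction.

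Applying this to $P$, every root on the unit circle is nonzero and therefore has multiplicity at most $10-1 = 9$, so one may take $m(a_0, a_2-a_1) := 9$ uniformly. The only delicate point in the argument is the disjointness of the three exponent blocks, but as noted this reduces at once to the inequality $a_1 > a_0$; otherwise the argument is entirely combinatorial.
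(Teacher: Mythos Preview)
Your argument is correct, and it is genuinely different from---and in fact sharper than---the paper's proof.

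The paper proceeds analytically: it divides out the common factor $z-1$ from $Q,R,S$ to obtain $\tilde P=z^{a_1+a_2}\tilde Q+z^{a_1+1}\tilde R+\tilde S$ with $\tilde Q$ having no zeros on the unit circle, then computes $\tilde P^{(n)}$ by Leibniz and shows that on $|z|=1$ the dominant contribution $(a_1+a_2)_{(n)}\min_{|z|=1}|\tilde Q(z)|$ eventually beats all the remaining terms. This yields an $n_0=n_0(a_0,a_2-a_1)$ and a threshold $c=c(a_0,a_2-a_1)$ for $a_1+a_2$, and the final bound is $m=\max(n_0,c+a_0)+1$, effectively computable but not explicit.

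Your route is purely algebraic: after checking (exactly as you do, using only $a_1>a_0$) that the three exponent blocks in $z^{a_1+a_2}Q$, $z^{a_1+1}R$, $S$ are pairwise disjoint, $P$ has at most $10$ nonzero monomials, and the Vandermonde/Euler-operator lemma bounds the multiplicity of any nonzero root by $9$. This gives the \emph{absolute} constant $m=9$, independent of $a_0$ and $a_2-a_1$, and hence a strictly stronger form of the lemma (and a correspondingly cleaner version of the degree bound in Theorem~\ref{thm1}). Your argument also never uses that $|z|=1$; it bounds the multiplicity of every nonzero root of $P$.

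What the paper's approach buys in exchange is robustness: it would still work for decompositions $z^{A}\tilde Q+\text{(lower blocks)}$ where the number of monomials is unbounded, provided the leading block has no zero on $|z|=1$. For the specific polynomial at hand, however, your sparse-polynomial bound is both simpler and quantitatively better.
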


\begin{proof}
Observe, that $1$ is a root of $Q$, $R$, and $S$. Thus, for technical reasons, we work with $\tilde P(z)=P(z)/(z-1)$ and, defining $\tilde Q(z)$, $\tilde R(z)$, and $\tilde S(z)$ analogously, we write
\[
\tilde P(z)= z^{a_1 + a_2} \tilde Q(z) + z^{a_1 + 1} \tilde R(z)+ \tilde S(z).
\]
Our first goal is to bound the $n$-th derivatives $|\tilde P^{(n)}(z)|$ with $|z|=1 $ away from zero. To this end we define the quantities
\begin{align*}
\eta(a_0)&:=\min\{|\tilde Q(z)| \,:\, |z|=1 \} > 0, \\
E_n=E_n(a_0)&:=\max\{|\tilde Q^{(k)}(z)| \,:\, 1\le k\le n,\, |z|=1 \},  \\
F_0=F_0(a_0,a_2-a_1)&:=\max\{|\tilde R(z)| \,:\,  |z|=1 \}, \\
F_n=F_n(a_0,a_2-a_1,n)&:=\max\{|\tilde R^{(k)}(z)| \,:\, 1\le k\le n,\, |z|=1 \},  \\
G_n=G(a_0,n)&:=\max\{|\tilde S^{(n)}(z)| \,:\, |z|=1 \}.
\end{align*}
For $n\ge 1$ one easily computes that (note that $(x)_{n}=x(x-1)\cdots(x-n+1)$ denotes the Pochhammer symbol)
\begin{align*}
\tilde P^{(n)}(z) =& (a_1+a_2)_{(n)}\tilde Q(z)z^{a_1+a_2-n} +
(a_1+1)_{(n)}\tilde R(z)z^{a_1+1-n} \\
&+ \sum_{k=0}^{n-1}{n\choose k}(a_1+a_2)_{(k)}\tilde Q^{(n-k)}(z)z^{a_1+a_2-k} \\
&+ \sum_{k=0}^{n-1}{n\choose k}(a_1+1)_{(k)}\tilde R^{(n-k)}(z)z^{a_1+1-k} 
+ \tilde S^{(n)}(z).
\end{align*}
Now for  $|z|=1$ we estimate
\begin{align}
|\tilde P^{(n)}(z)| \ge & (a_1+a_2)_{(n)}\eta(a_0) - 2^{-n+1}(a_1+a_2)_{(n)}F_0 \nonumber \\
&- 2^{n-1}(a_1+a_2)_{(n-1)}E_n - 2^{n-1}(a_1+1)_{(n-1)}F_n - G_n\label{eq:betrag}\\
\ge& (a_1+a_2)_{(n)} \Big(\eta(a_0) - 2^{-n+1}F_0 - \frac{2^{n-1}(E_n+F_n)}{a_1+a_2-n+1} - \frac{G_n}{(a_1+a_2)_{(n)}}\Big). \nonumber
\end{align}
Now we fix $a_0$ and the difference $a_2-a_1$. Then we choose $n_0=n_0(a_0,a_2-a_1)$ such that 
$$
\eta(a_0) - 2^{-n_0+1}F_0 >0.
$$
In view of \eqref{eq:betrag} there exists a constant $c=c(a_0,a_2-a_1)$ such that for all $a_1,a_2$ with $a_1+a_2 > c$ (with our fixed difference) we have $|\tilde P^{(n_0)}(z)|  > 0$ for all $z$ with $|z|=1$. If, on the other hand, $a_1+a_2 \le c$, then we have $\deg \tilde P \le c+a_0$. Therefore, in any case, the multiplicity of a root of $\tilde P$ on the unit circle is bounded by $\max(n_0,c+a_0)$ and the result follows by taking $m=\max(n_0,c+a_0)+1$. 
\end{proof}

The following lemma is a simple special case of Mann's theorem.

\begin{lemma}\label{abcz}
Let   $a, b, c, p, q  \in \ZZ$ such that  $(p,q)\ne (0,0)$ and $a,b,c$ nonzero.
If $\zeta$ is  a  root of unity such that
$$a \zeta^p + b \zeta^q  +c= 0$$
then  the order of $\zeta$ divides $\; 6 \, \gcd (p, q).$
\end{lemma}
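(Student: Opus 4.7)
The plan is to view the identity $a\zeta^p+b\zeta^q+c=0$ as a vanishing integer linear combination of the three roots of unity $\zeta^p$, $\zeta^q$, and $1$, and to invoke Mann's theorem on vanishing sums of roots of unity.

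First I verify that no proper subsum vanishes. Since $a,b,c$ are all nonzero and $\zeta^p$, $\zeta^q$, $1$ all have modulus~$1$, no single term is zero. If a two-term subsum such as $a\zeta^p+c$ were zero, then subtracting from the full relation would force $b\zeta^q=0$, contradicting $b\ne 0$; the same reasoning rules out the other two two-term subsums.

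Next I apply Mann's theorem to this three-term vanishing sum: the pairwise quotients of the three roots of unity involved are themselves roots of unity whose orders divide the product $2\cdot 3=6$ of the primes at most~$3$. In particular, $\zeta^{6p}=(\zeta^p/1)^6=1$ and $\zeta^{6q}=(\zeta^q/1)^6=1$, so the order of $\zeta$ divides both $6p$ and $6q$, and hence divides $\gcd(6p,6q)=6\gcd(p,q)$, which is the desired conclusion.

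The one subtlety is the possibility of degenerate configurations, namely $p=0$, $q=0$, or $\zeta^p=\zeta^q$, in which the three ``roots of unity'' involved are not distinct and Mann's theorem does not directly apply. In each of these cases the relation collapses to a two-term expression of the shape $A\alpha+B=0$ with $A,B$ nonzero integers and $|\alpha|=1$, forcing $\alpha=\pm 1$; the required divisibility then follows from $\zeta^{2p}=1$ or $\zeta^{2q}=1$ together with $2\mid 6$ and the convention $\gcd(0,n)=|n|$. Handling these degenerate cases is the only real obstacle to a smooth write-up; otherwise the argument reduces to a direct citation of Mann.
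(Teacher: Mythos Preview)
Your proof is correct and follows the same route as the paper, which simply records the lemma as ``a special case of \cite[Theorem 1]{mann}''; you have merely spelled out the verification that no proper subsum vanishes and the passage from $(\zeta^p)^6=(\zeta^q)^6=1$ to $\mathrm{ord}(\zeta)\mid 6\gcd(p,q)$. Your separate treatment of the degenerate configurations is harmless but in fact unnecessary, since Mann's theorem does not require the roots of unity to be distinct and your no-subsum argument already covers those cases.
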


\begin{proof}
This is a special case of \cite[Theorem 1]{mann}.
\end{proof}

For subsequent use we recall some notation and facts (used in a similar context in \cite{grohirmcm}).
A  {\em divisor} on the complex plane is a finite sum
$$  D=\sum_{j  \in J}  a_j \cdot z_j$$
where $a_j \in \ZZ\ons $ and $${\rm supp \,} (D):= \set{z_j \in  \CC\;:\; j \in J}$$
is the support of $D$; $D$ is said to be  {\em effective} if all its coefficients are positive.

\medskip

The set of all divisors on $\CC$ forms the abelian group ${\rm Div \,} (\CC)$, and the natural
 evaluation map $\sigma: {\rm Div \,} (\CC) \to \CC $ is given by
$$ 
\sigma ( D) =\sum_{j  \in J}  a_j z_j\,.
$$

\medskip

A  {\em polar rational polygon}  (prp) is an effective divisor  $D=\sum_{j  \in J}  a_j \cdot z_j$ such that each $z_j$ is a root of unity and $ \sigma ( D) =0\,.$ In this case the order ${\rm o } (D) $ is the cardinality of the subgroup of
$ \CC \ons $ generated by the roots of unity $\set{z_j / z_k \;:\; j,k\in J}$.
The prp $D$ is called  {\em primitive} if there do not exist non-zero prp's $D'$ and $D''$ such that $D=D' + D''$.
In particular, the coefficients of $D', D''$ are positive, thus each prp can be expressed as a sum of primitive prp's.

\bigskip

Every polynomial  $f \in \ZZ[X] \ons$ can be uniquely written in the form
\begin{equation}\label{dcf}
  f=\sum_{j \in J} \eps_j a_j X^j
\end{equation}
with $J \subseteq \set{0, \ldots, \deg  (f)}$, $\eps_j=\pm 1$ and $a_j > 0$. We call
$$\ell (f):= \Card (J)$$
the length of $f$. For $\zeta \in \CC$ with $f(\zeta)=0$ we define the effective divisor of $f$ (w.r.t. $\zeta$) by
$$D f (\zeta):= \sum_{j \in J}  a_j (\eps_j \zeta^j) \,.$$

\bigskip

\begin{proposition}\label{farc}
Let $a_2 > a_1 > a_0$ and let $P$ as in \eqref{pzdef} be associated to $T(a_0,a_1,a_2)$. If   $\zeta$ is  a  root of unity such that  $P(\zeta) = 0$ then
the order of  $\zeta$ satisfies
$$
\mathrm{ord}(\zeta) \le 420 (a_2-a_1+a_0-1).
$$
\end{proposition}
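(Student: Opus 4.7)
The plan is to follow the polar-rational-polygon (prp) machinery introduced just before the statement, mirroring the strategy of \cite{grohirmcm}, and to combine it with Mann's theorem (Lemma~\ref{abcz}) and the block decomposition of $P$ from Lemma~\ref{pblock}. Since $P$ has height~$2$ and at most $10$ monomial terms, writing $P=\sum_{j\in J}\eps_j a_j z^{e_j}$ with $|J|\le 10$, $a_j\in\{1,2\}$ and $\eps_j=\pm1$, the effective divisor $D_0:=DP(\zeta)=\sum_{j\in J}a_j(\eps_j\zeta^{e_j})$ has total weight at most $12$, and $P(\zeta)=0$ forces $\sigma(D_0)=0$. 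The first step is to decompose $D_0=D_1+\cdots+D_s$ into primitive prp's and to argue that $\mathrm{ord}(\zeta)$ is controlled by data attached to the individual pieces.

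For each primitive $D_i$ of length $\ge 3$, the plan is to bound $\mathrm{ord}(\zeta)$ in terms of the gcd $g_i$ of exponent differences within $\mathrm{supp}(D_i)$. Length-$3$ pieces are handled directly by Lemma~\ref{abcz}, which yields $\mathrm{ord}(\zeta)\mid 6\gcd(e_j-e_l,e_k-e_l)$. For pieces of length between $4$ and $7$, the Conway--Jones classification of minimal vanishing sums of roots of unity bounds the prime divisors of the order of $\zeta^{g_i}$ by $7$, and careful tracking of the possible prime-power contributions yields $\mathrm{ord}(\zeta)\mid 420\cdot g_i$, with $420=\lcm(1,\ldots,7)$. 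Primitive pieces of length $\ge 8$ are excluded by a pigeonhole argument using the total weight bound $\sum a_j\le 12$. Length-$2$ pieces have to be treated separately: each forces an identity $\zeta^{e_j-e_k}=\pm 1$ which, combined with the constraints from other pieces (or with the very restricted pairing forced when every primitive piece has length~$2$, since $D_0$ carries only two weight-$2$ support elements), still yields a bound of the same shape $\mathrm{ord}(\zeta)\mid 420\,g_i$ with $g_i\le N$.

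The main obstacle is the bound $g_i\le N:=a_2-a_1+a_0-1$. The exponents appearing in $P$ cluster into three intervals of diameter at most~$N$, coming from the blocks $z^{a_1+a_2}Q(z)$, $z^{a_1+1}R(z)$ and $S(z)$. If $\mathrm{supp}(D_i)$ lies within a single block the bound is immediate. When $\mathrm{supp}(D_i)$ straddles several blocks, the plan is to exploit the elementary observation (obtained by the triangle inequality from the explicit forms of $Q$ and $S$) that $Q(\zeta)$ and $S(\zeta)$ have no root-of-unity zeros other than $\zeta=1$; this rules out many mixed configurations, and a case analysis on which blocks $\mathrm{supp}(D_i)$ visits reduces the surviving cases to ones with $g_i\le N$.

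Combining everything, every primitive piece of $D_0$ enforces $\mathrm{ord}(\zeta)\mid 420\,N$, and hence $\mathrm{ord}(\zeta)\le 420(a_2-a_1+a_0-1)$ as claimed.
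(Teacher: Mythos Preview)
Your high-level strategy---decompose $DP(\zeta)$ into primitive prp's and bound $\mathrm{ord}(\zeta)$ via Mann's theorem together with a bound on an exponent difference inside each piece---is exactly the paper's approach. However, two of your key steps are not justified and do not work as stated. First, the claim that primitive pieces of length $\ge 8$ are ``excluded by a pigeonhole argument'' is incorrect: nothing prevents $DP(\zeta)$ itself from being primitive with $\ell=10$. This is harmless, since Mann's theorem still gives order $\le 2\cdot 3\cdot 5\cdot 7=210$ for $\ell\le 10$, but you should simply use that bound rather than try to exclude long pieces.

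The real gap is your justification of $g_i\le N$ for pieces whose support straddles two or three blocks. The observation that $Q$ and $S$ have no root-of-unity zero other than $1$ does not control $g_i$: it only rules out a piece consisting of \emph{all} monomials of $Q$ (or of $S$) and nothing else, which is anyway the easy ``single block'' situation. The paper instead splits into two cases. If some piece $DP_j$ contains $\ge 2$ monomials from the \emph{same} block, their exponent difference is at most $\max\{\deg Q,\deg R,\deg S\}=N$, so $\mathrm{ord}(\zeta)/(2N)\le \mathrm{o}(DP_j)\le 210$. Otherwise every piece has $\le 1$ monomial per block and hence length $\le 3$; length-$3$ pieces are handled by Lemma~\ref{abcz} after taking a suitable integer combination of the two exponent differences (giving $\mathrm{ord}(\zeta)\le 6(2a_0+a_2-a_1)$), and the all-length-$2$ case is settled by a counting argument: since $(\ell(Q),\ell(R),\ell(S))=(3,4,3)$, there must exist a pair with $\ell(R_{j_1})=0$ and another with $\ell(S_{j_2})=0$, and the two resulting divisibility relations combine to give $\mathrm{ord}(\zeta)\le 2(a_2-a_1+3a_0+1)$. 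Your sketch of the length-$2$ case via the two weight-$2$ support elements is a different idea and is not carried far enough: pairing the two weight-$2$ monomials only gives $\mathrm{ord}(\zeta)\mid a_1+a_2+a_0-1$, which by itself is not bounded by $420N$; you would still need to combine it with a second relation, and you have not shown how.
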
  
  
\begin{proof}
We follow the proof of \cite[Theorem 2.1]{grohirmcm} and write the polynomials $Q, R, S$
in the form
$$Q (X) =\sum Q_j (X), \qquad R (X) =\sum R_j (X), \qquad S (X) =\sum S_j (X)$$
with finite sums of integer polynomials such that
$$D P (\zeta)= \sum_{j}  D P_j (\zeta)= \sum_{j} \bigl( \zeta^{a_1 + a_2} D Q_j(\zeta) +  \zeta^{a_1 + 1} D R_j(\zeta)+  DS_j(\zeta)\bigr)$$
is a decomposition of the divisor  $ D P (\zeta)$ into primitive polar rational polygons $ D P_j (\zeta)$, thus for every $j$ the sum
$$\zeta^{a_1 + a_2} Q_j(\zeta) + \zeta^{a_1 + 1} R_j(\zeta)+ S_j(\zeta)$$
is the evaluation of the primitive prp $ D P_j (\zeta)$.
Observe that in view of Lemma \ref{pblock} the (naive) height of the polynomials $Q_j, R_j, S_j$ does not exceed  $2$ since the coefficients cannot increase when performing the decomposition of a prp into primitive prp's.

\medskip

Case 1: \qquad $\max \set{\ell (Q_j), \ell (R_j), \ell (S_j)} > 1$ for some $j$

\medskip

 Let us first assume $\ell (Q_j)> 1$ for some $j$. The ratio of any two roots of unity occurring in $D Q_j (\zeta)$
 can be written in the form $\pm \zeta^e$ with $1 \le e \le \deg (Q_j) \le \deg (Q)$. Therefore we have
 $$\frac{\mathrm{ord}(\zeta)}{2 \deg (Q) } \le {\rm o \,} (D P_j (\zeta)) .$$
 By Mann's Theorem \cite{mann}, $ {\rm o \,} (D P_j (\zeta))$ is bounded by the product of primes
 at most equal to
 $$\ell (P_j)\le \ell (Q_j) +\ell (R_j) + \ell (S_j)\le \ell (Q) +\ell (R) + \ell (S)\le 3+4+3=10\,.$$
 The product of the respective primes is at most $2\cdot 3 \cdot 5\cdot 7=210$. Therefore by Lemma \ref{pblock} we find
 $$\frac{\mathrm{ord}(\zeta)}{2 (a_0+1)}=\frac{\mathrm{ord}(\zeta)}{2 \deg (Q) } \le 210,$$
 which yields
 $$\mathrm{ord}(\zeta) \le 420 \; (a_0+1).$$
 Analogously, the other two cases yield
 $$\mathrm{ord}(\zeta) \le 420 \; (a_2-a_1+a_0-1) \qquad \text{ or } \qquad\mathrm{ord}(\zeta)\le 420 \; (a_0+1),$$
 and we conclude
\begin{equation}\label{resge1}
\mathrm{ord}(\zeta) \le 420 \; \max \set{a_0+1, \, a_2-a_1+a_0-1 } =420 \; (a_2-a_1+a_0-1).
\end{equation}

\medskip

Case 2:  \qquad $\max \set{\ell (Q_j), \ell (R_j), \ell (S_j)} \le 1$ for  all $j$

\medskip

In this case, $D P_j (\zeta)$ is either of the form
\begin{equation}\label{Pdouble}
D P_j (\zeta) = c_{j1}\zeta^{b_{j1}} + c_{j2}\zeta^{b_{j2}}  
\end{equation}
or of the form
\begin{equation}\label{Ptriple}
D P_j (\zeta) = c_{j1}\zeta^{b_{j1}} + c_{j2}\zeta^{b_{j2}}  + c_{j3}\zeta^{b_{j3}} , 
\end{equation}
where $c_{ji}\in \{-2,\ldots, 2\}$ by Lemma~\ref{lem:mult}. We distinguish two subcases.

\medskip

Case 2.1: There exists $j$ such that $D P_j (\zeta)$ is of the form \eqref{Ptriple}.

\medskip

 In this situation $D P_j (\zeta)$ can be written more explicitly as
\begin{equation}\label{Pt1}
D P_j (\zeta) = c_{j1}\zeta^{a_1+a_2+\eta_1} + c_{j2}\zeta^{a_1+\eta_2}  + c_{j3}\zeta^{\eta_{3}}
\end{equation}
or 
\begin{equation}\label{Pt2}
D P_j (\zeta) = c_{j1}\zeta^{a_1+a_2+\eta_1} + c_{j2}\zeta^{a_2+\eta_2}  + c_{j3}\zeta^{\eta_{3}},
\end{equation}
where $\eta_1\in\{0,a_0,a_0+1\}$, $\eta_2\in\{1,a_0\}$, and $\eta_3\in\{0,1,a_0+1\}$. If $D P_j (\zeta)$ is as in \eqref{Pt1} then $P_j (\zeta)=0$ implies that
\[
c_{j1}\zeta^{a_1+a_2+\eta_1-\eta_3} + c_{j2}\zeta^{a_1+\eta_2-\eta_3}  + c_{j3} =0.
\] 
Now, using Lemma~\ref{abcz} we gain
\[
\mathrm{ord}(\zeta) \mid 6 \mathrm{gcd}(a_1+a_2+\eta_1-\eta_3, a_1+\eta_2-\eta_3) 
\]
which yields
\[
\mathrm{ord}(\zeta) \mid 6(a_2-a_1+\eta_1-2\eta_2+\eta_3),
\]
hence, 
\begin{equation}\label{tripleresult}
\mathrm{ord}(\zeta) \le 6(2 a_0 + a_2 - a_1).
\end{equation}
If $D P_j (\zeta)$  is as in \eqref{Pt2}, by analogous arguments we again obtain \eqref{tripleresult}.

\medskip

Case 2.2:  For all $j$ the divisor $D P_j (\zeta)$ is of the form \eqref{Pdouble}.

\medskip

In this case we have to form pairs of the 10 summands of $D P(\zeta)$ to obtain the divisors $D P_j (\zeta)$. As $\ell(R)=\ell(S)=4$ there must exist $j_1,j_2$ such that $\ell(R_{j_1})=0$ and $\ell(S_{j_2})=0$.  In what follows, $c_{ij}\in\{-2,\ldots,2\}$, and  $\eta_1,\eta_1'\in\{0,a_0,a_0+1\}$, $\eta_2,\eta_2'\in\{1,a_0\}$, and $\eta_3,\eta_3'\in\{0,1,a_0+1\}$.
Then $D P_{j_1} (\zeta)$ is of the form
\begin{equation}\label{pairj1}
D P_{j_1} (\zeta) = c_{j_11}\zeta^{a_1+a_2+\eta_1} + c_{j_13}\zeta^{\eta_{3}}
\end{equation}
which yields
\begin{equation*}\label{pairj1eq}
c_{j_11}\zeta^{a_1+a_2+\eta_1-\eta_3} + c_{j_13} = 0,
\end{equation*}
and, hence, 
\begin{equation}\label{pairj1ord}
\mathrm{ord}(\zeta)\mid 2(a_1+a_2+\eta_1-\eta_3).
\end{equation}
For $D P_{j_2} (\zeta)$ we have two possibilities.  Either we have 
\begin{equation}\label{pairj21}
D P_{j_2} (\zeta) = c_{j_21}\zeta^{a_1+a_2+\eta'_1} + c_{j_22}\zeta^{a_1+\eta'_2} .
\end{equation}
This yields
\begin{equation*}\label{pairj21eq}
c_{j_21}\zeta^{a_2+\eta'_1-\eta_2'} + c_{j_22} = 0,
\end{equation*}
and, hence, 
\begin{equation}\label{pairj21ord}
\mathrm{ord}(\zeta)\mid 2(a_2+\eta_1'-\eta_2').
\end{equation}
The second alternative for $D P_{j_2} (\zeta)$ reads
\begin{equation}\label{pairj22}
D P_{j_2} (\zeta) = c_{j_21}\zeta^{a_1+a_2+\eta'_1} + c_{j_22}\zeta^{a_2+\eta'_2}.
\end{equation}
This yields
\begin{equation*}\label{pairj22eq}
c_{j_21}\zeta^{a_1+\eta'_1-\eta_2'} + c_{j_22} = 0,
\end{equation*}
and, hence, 
\begin{equation}\label{pairj22ord}
\mathrm{ord}(\zeta)\mid 2(a_1+\eta_1'-\eta_2').
\end{equation}
If $P_{j_2}$ is of the form \eqref{pairj21}, then \eqref{pairj1ord} and \eqref{pairj21ord} yield
that
\[
\mathrm{ord}(\zeta)\mid 2 \mathrm{gcd}(a_1+a_2+\eta_1-\eta_3, a_2+\eta_1'-\eta_2'),
\]
hence, $\mathrm{ord}(\zeta)\mid 2(a_1-a_2+\eta_1-2\eta_1'+2\eta_2'-\eta_3)$ and therefore
\begin{equation}\label{endres2}
\mathrm{ord}(\zeta)\le 2(a_2-a_1+3a_0+1).
\end{equation}
If $P_{j_2}$ is of the form \eqref{pairj22}, then \eqref{pairj1ord} and \eqref{pairj22ord} yield
\[
\mathrm{ord}(\zeta)\mid 2 \mathrm{gcd}(a_1+a_2+\eta_1-\eta_3, a_1+\eta_1'-\eta_2'),
\]
hence, \eqref{endres2} follows again.

Summing up, the proposition is proved by combining  \eqref{resge1}, \eqref{tripleresult}, and \eqref{endres2}.

%
%
%
%
%
%
%
%
%
%
%
%
%
%
%
%
%
%
 \end{proof}

Combining results from \cite{mckrowsmy} with our previous considerations we can now prove the main theorem of this paper.

\begin{proof}[Proof of Theorem~\ref{thm1}]
The fact that $\tau$ is either a Salem number or a quadratic Pisot number  as well as the decomposition of $R_T$ given in \eqref{eq:clear} follows immediately from~\eqref{eq:decomp}. The bound on the orders of the roots of the cyclotomic polynomials is a consequence of Proposition~\ref{farc}. Together with Lemma~\ref{lem:mult} this proposition yields the estimate $\eqref{SalemLowerBound}$ on the degree of $S$, where the explicitly computable constant $m(a_0,a_2-a_1)$ is the one stated in Lemma~\ref{lem:mult}.
 \end{proof}

\section{Convergence properties of Salem numbers generated by star-like trees}

In this section we prove Theorems~\ref{th:m} and~\ref{th:general}.
In the following proof of Theorem~\ref{th:m} we denote by $M_m(x)=x^m-x^{m-1}-\dots-x-1$ the minimal polynomial of the $m$-bonacci number $\varphi_m$.

\begin{proof}[Proof of Theorem~\ref{th:m}]
Note that $Q(x)=(x-1)M_{a_0}(x)$ holds. The theorem is proved if for each $\varepsilon > 0$ the polynomial $P(x)$ has a root $\zeta$ in the open ball $B_\varepsilon(\varphi_{a_0})$ for all sufficiently large $a_1$. We prove this by using Rouch\'e's Theorem. Let $\varepsilon > 0$ be sufficiently small and set $C_\varepsilon : =\partial B_\varepsilon(\varphi_{a_0})$. Then $\delta := \min\{|M_{a_0}(x)|\,:\, x \in C_\varepsilon \} >0$. Thus, on $C_\varepsilon$ we have the following estimations.
\begin{align}
\label{est}
|x^{a_1+1}R(x)+S(x)| &< (\varphi_{a_0}+\varepsilon)^{a_1+1}4(\varphi_{a_0}+\varepsilon)^{\eta+a_0-1} + 4(\varphi_{a_0}+\varepsilon)^{a_0+1},
\\
|x^{a_1+a_2}Q(x)| &> (\varphi_{a_0}-\varepsilon)^{2a_1+\eta}(\varphi_{a_0}-1-\varepsilon)\delta.
\nonumber
\end{align}  
Combining these two inequalities yields
\begin{equation}\label{P_est}
|P(x)| > (\varphi_{a_0}-\varepsilon)^{2a_1+\eta}(\varphi_{a_0}-1-\varepsilon)\delta - 
4(\varphi_{a_0}+\varepsilon)^{a_1+a_0+\eta} - 4(\varphi_{a_0}+\varepsilon)^{a_0+1}.
\end{equation}
Since \eqref{est} and \eqref{P_est} imply that for sufficiently large $a_1$ we have
\[
|P(x)| > |x^{a_1+1}R(x)+S(x)| \qquad (x\in C_\varepsilon),
\]
Rouch\'e's Theorem yields that $P(x)$ and $x^{a_1+a_2}(x-1)M_{a_0}(x)$ have the same number of roots in $B_\varepsilon(\varphi_{a_0})$. Thus our assertion is proved.
\end{proof}

To prove Theorem~\ref{th:general} we need the following auxiliary lemma.

\begin{lemma}\label{lem:aux}
Let $r\ge 1$, $a_r>  \cdots > a_1 > a_0 \ge 2$, and choose $k\in\{1,\ldots,r-1\}$. If $P$ is as in \eqref{pzdef} and $Q$ as in \eqref{eq:qz} then, for fixed $a_0,\ldots,a_k$ and $a_{k+1},\ldots, a_r$ sufficiently large, we have
\[
\#\{\xi \in \mathbb{C}\;:\, P(\xi)=0,\, |\xi| > 1\} = \#\{\xi \in \mathbb{C}\;:\, Q(\xi)=0,\, |\xi| > 1\} = 1. 
\]
\end{lemma}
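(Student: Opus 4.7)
I would split the argument into the two equalities. The statement about $P$ is immediate from \eqref{eq:decomp}: writing $P(z)=(z-1)^{r+1}R_T(z)=(z-1)^{r+1}C(z)S(z)$, only the Salem or quadratic Pisot factor $S$ contributes a zero of modulus greater than $1$ (the root $\tau$), while all remaining factors contribute only unit-circle zeros. This step is independent of the size of $a_{k+1},\ldots,a_r$.

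For $Q$, my plan is a Rouch\'e comparison. First I would derive the algebraic identity
\[
P(z) = \Bigl(\prod_{i=k+1}^{r}(z^{a_i}-1)\Bigr)Q(z) + E(z),
\]
with explicit error
\[
E(z) = (z-1)\prod_{i=0}^{k}(z^{a_i}-1)\sum_{i=k+1}^{r}\prod_{\substack{j=k+1\\j\ne i}}^{r}(z^{a_j}-1),
\]
obtained from \eqref{pzdef} and \eqref{eq:qz} by factoring $\prod_{i>k}(z^{a_i}-1)$ out of $P$ and then collapsing the resulting tail via the telescoping identity $(r-k)-z\sum_{i>k}\frac{z^{a_i-1}-1}{z^{a_i}-1}=(z-1)\sum_{i>k}\frac{1}{z^{a_i}-1}$. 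The crucial observation is $\deg E=\deg P-\min_{i>k}a_i$, so on any fixed circle $|z|=R>1$ on which $Q$ is nonvanishing, the quotient $|E(z)|/|\prod_{i>k}(z^{a_i}-1)\,Q(z)|$ is bounded by a sum of terms each carrying a factor $1/(R^{a_i}-1)$, and therefore tends to $0$ as $\min_{i>k}a_i\to\infty$.

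Before invoking Rouch\'e I would establish that $Q$ itself has a zero of modulus exceeding $1$. Factoring $Q(z)=(z-1)^{k+1}\widetilde{Q}(z)$ and evaluating at $z=1$, one obtains $\widetilde{Q}(1)=\prod_{i=0}^{k}a_i\bigl(1-r+\sum_{i=0}^{k}1/a_i\bigr)$; the bracket is strictly negative under $a_i\ge 2$ and $k\le r-1$ (the tightest case $r=2$, $k=1$ still yields a value at most $-1+1/2+1/3<0$), while $\widetilde{Q}(z)\to+\infty$ as $z\to+\infty$, so the intermediate value theorem produces a real root $\rho_0>1$ of $Q$. Letting $\rho_1\in(1,\rho_0]$ denote the smallest modulus of a zero of $Q$ in $\{|z|>1\}$, I would fix $R\in(1,\rho_1)$ avoiding the finitely many zeros of $Q$ on $|z|=R$, perturbing it slightly if necessary so that $R\ne\tau$ as well.

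For $a_{k+1},\ldots,a_r$ sufficiently large the estimate above gives $|E(z)|<|\prod_{i>k}(z^{a_i}-1)\,Q(z)|$ on $|z|=R$, and Rouch\'e's theorem yields that $P$ and $\prod_{i>k}(z^{a_i}-1)\,Q(z)$ have equal numbers of zeros inside $|z|<R$, hence equal numbers outside. Since $\prod_{i>k}(z^{a_i}-1)$ has only unit-circle zeros and the zeros of $Q$ in $\{|z|>1\}$ all lie in $\{|z|>R\}$ by the choice of $R$, this collapses to
\[
\#\{\xi:Q(\xi)=0,\,|\xi|>1\}=\#\{\xi:P(\xi)=0,\,|\xi|>R\}\le\#\{\xi:P(\xi)=0,\,|\xi|>1\}=1,
\]
and the existence of $\rho_0$ forces equality to $1$. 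The main difficulty lies in the Rouch\'e bookkeeping: $R$ has to be chosen independently of the growing $a_i$'s (legitimate because $Q$ depends only on $a_0,\ldots,a_k$), a possible coincidence of $R$ with $\tau$ must be ruled out by a small perturbation, and the sign of $\widetilde Q(1)$ has to be verified in the edge case $r=2$.
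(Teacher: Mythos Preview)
Your proof is correct and follows essentially the same strategy as the paper: the count for $P$ comes directly from \eqref{eq:decomp}, existence of a real root $>1$ for $Q$ is established separately, and a Rouch\'e comparison between $P$ and (a multiple of) $Q$ shows that $Q$ cannot have more than one root outside the unit circle. The technical details differ slightly. The paper uses the cruder expansion $P(z)=z^{a_{k+1}+\cdots+a_r}Q(z)+O(z^{a_{k+2}+\cdots+a_r+\eta})$ in place of your exact identity, and argues the ``at most one'' direction by contradiction (two roots of $Q$ outside the closed unit disk would, via Rouch\'e, force two roots of $P$ outside). For existence the paper splits into cases: for $k<r-2$ it uses $|Q(0)|=r-k-1>1$, while for $k\in\{r-2,r-1\}$ it checks that $Q^{(\ell)}(1)=0$ for $\ell<a_0+\cdots+a_k-1$ and $Q^{(a_0+\cdots+a_k-1)}(1)<0$; your sign computation for $\widetilde Q(1)$ handles all cases uniformly and is a bit cleaner. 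Both routes are equally valid.
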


\begin{proof}
First observe that 
\begin{equation}\label{eq:PQesti}
P(z)= z^{a_{k+1} + \cdots + a_r}Q(z) + O(z^{a_{k+2} + \cdots + a_r+\eta})
\end{equation}
for some fixed constant $\eta \in \mathbb{N}$.Since by ~\eqref{eq:decomp} the polynomial $P$ has exactly one (Salem) root outside the unit disk, it is sufficient to prove the first equality in the statement of the lemma. 

We first show that $Q$ has at least one root $\xi$ with $|\xi| > 1$. This is certainly true for $k < r-2$ as in this case we have $|Q(0)|>1$. For $k\in \{r-2,r-1\}$ we see that
\[
Q^{(\ell)}(1)=0,\;\hbox{for} \;  0\le \ell < a_0+\cdots+a_k - 1, \quad Q^{(a_0+\cdots+a_k - 1)}(1)<0.
\] 
As the leading coefficient of $Q$ is positive, this implies that $Q(\xi)=0$ for some $\xi >1$.

Now we show that $Q$ has at most one root $\xi$ with $|\xi| > 1$. Assume on the contrary that there exist two distinct roots $\xi_1,\xi_2\in\mathbb{C}$ of $Q$ outside the closed unit circle. Applying Rouch\'e's Theorem to $P(z)$ and $z^{a_{k+1}+\cdots+a_r}Q(z)$ shows by using \eqref{eq:PQesti} that also $P$ has two zeroes outside the closed unit circle which contradicts the fact that $P$ is a product of a Salem polynomial and cyclotomic polynomials, see~\eqref{eq:decomp}.
\end{proof}

\begin{proof}[Proof of Theorem~\ref{th:general}]
From Lemma~\ref{lem:aux} and \eqref{eq:qz} we derive that
\[
Q(z)=C(z)T(z)z^s,
\]
where $s\in\{0,1\}$ and $T$ is a Pisot or a Salem polynomial. To show the theorem we have to prove that $T$ is a Pisot polynomial. It suffices to show that $C(z)T(z)$ is not self-reciprocal. 

We distinguish three cases. If $k < r-2$ then $|C(0)T(0)|>1$, hence, as this polynomial has leading coefficient $1$ it cannot be self-reciprocal.

Denote the $\ell$-th coefficient of the polynomial $f$ by $[z^\ell]f(z)$. If $k=r-2$ we have $[z]C(z)T(z)=2(-1)^{r-1}$ and $[z^{a_0+\cdots+a_{r-2}}]C(z)T(z)=-r$. As $k>0$ we have $r>2$ and again the polynomial cannot be self-reciprocal.

Finally for $k=r-1$ we have $[z]C(z)T(z)=0$ and $[z^{a_0+\cdots+a_{r-1}-1}]C(z)T(z)=-r$ which again excludes self-reciprocity of $C(z)T(z)$.
\end{proof}

\comment{
\rmus{Finding cyclotomic parts in polynomials:   \cite[Section 2]{beukerssmyth}}

\rmus{Modifying and applying techniques developped in  \cite[Section~7]{grossmcmullen}?}
}

\section{Concluding remarks}

In this note we have studied Coxeter polynomials of star-like trees with special emphasis on star-like trees with three arms. It would be nice to extend Theorem~\ref{th:m} to star-like trees $T(a_0,\ldots, a_r)$ with four and more arms in order to get lower estimates on the degrees of the Salem polynomials involved in Theorem~\ref{th:general}. In fact, the estimate on the maximal multiplicity of the irreducible factors of $R_T$ contained in Lemma~\ref{lem:mult} can be carried over to star-like trees with larger values of $r$. Concerning Proposition~\ref{farc}, the argument based on Mann's Theorem used in order to settle Case~1 of its proof can be extended to $T(a_0,\ldots, a_r)$, however, in the situation of Case~2 we were not able to prove that the orders of the occurring roots of unity are bounded by a reasonable bound. We expect that a generalization of this case requires new ideas.

\bigskip

{\bf Acknowledgment}. 
The authors are indebted to the referee for insightful comments on the first version
of this paper.


\def\cprime{$'$}

\end{document}